\newcommand{\df}{\textbf}
\newtheorem{prop}{Proposition}
\title
[Extra updates can delay mixing]
{Some circumstances where \\ extra updates can delay mixing}
\author{Alexander E. Holroyd}
\address{Microsoft Research, 1 Microsoft Way,
Redmond, WA 98052, USA} \email{holroyd at
microsoft.com} \urladdr{http://research.microsoft.com/~holroyd}
\date{20 January 2011}
\keywords{mixing time, censoring, coloring, transposition walk}
\subjclass[2010]{60J10; 82C20; 05C15}
\begin{document}
\maketitle

\begin{abstract}
Peres and Winkler proved a `censoring' inequality for Glauber dynamics on
monotone spins systems such as the Ising model.  Specifically, if, starting
from a constant-spin configuration, the spins are updated at some sequence of
sites, then inserting another site into this sequence brings the resulting
configuration closer in total variation to the stationary distribution.  We
show by means of simple counterexamples that the analogous statements fail
for Glauber dynamics on proper colorings of a graph, and for lazy
transpositions on permutations, answering two questions of Peres.  It is not
known whether the censoring property holds in other natural settings such as
the Potts model.
\end{abstract}

\section{Introduction}

Peres and Winkler \cite{peres-winkler} proved the following striking and
useful property of Glauber dynamics on the Ising model.  Consider a finite
set of sites with arbitrary ferromagnetic pair interactions, and let $\pi$ be
the associated stationary distribution on spin configurations.  Starting from
a constant spin configuration, apply single-site updates at a finite
deterministic sequence of sites, where each update consists of replacing the
spin at the chosen site with a random spin chosen according to its
conditional law under $\pi$ given all other spins.  This results in a random
configuration.  If an additional site is inserted into the update sequence,
the resulting configuration is no further from $\pi$ in total variation
distance.

The above result has proved to be an invaluable tool in the analysis of
mixing time for the Ising model; see the applications in
\cite{ding-lubetzky-peres,ding-peres,martinelli-sinclair,martinelli-toninelli,
mossel-sly}.  Peres and Winkler prove their result in the more general
setting of {\em monotone} spin systems; that is, those in which the set of
spins is totally ordered and single-site updates stochastically respect this
ordering. The purpose of this note is to demonstrate that analogous
statements fail in two other natural settings: proper colorings and lazy
transpositions.

We first consider colorings.  Let $G=(V,E)$ be a finite simple graph.  A
(proper) \df{$q$-coloring} of $G$ is a map from $V$ to $\{1,\ldots,q\}$ that
assigns distinct values (colors) to adjacent vertices. Let $\mu$ be a
probability measure on the set of $q$-colorings and let $v\in V$ be a vertex.
Define the \df{recoloring operator} $\kappa(v)$ as follows. Let
$\mu\cdot\kappa(v)$ be the law of the coloring obtaining from a coloring with
law $\mu$ by replacing the color at $v$ with a uniformly random color from
the set of colors absent from $v$'s neighbours (conditional on the existing
coloring). Let $\pi$ be the uniform measure on all $q$-colorings of $G$. An
equivalent interpretation of $\kappa(v)$ is that the color at $v$ is replaced
with a random color chosen according to its conditional law under $\pi$,
given the existing coloring of $V\setminus\{v\}$.  Observe also that $\pi$ is
the stationary distribution of the Markov chain that recolors a random vertex
(chosen according to any distribution with full support) at each step.

\begin{prop}\label{col}
Consider proper $4$-colorings of the triangle. Let $d$ be any continuous
metric on the space of probability measures on colorings, and let $\delta$ be
the point measure on some fixed coloring.  There exist integers $1\leq t<m$
and vertices $i_1,\ldots,i_m,I$ such that, writing
$$\mu=\delta\cdot \kappa(i_1)\cdots \kappa(i_m),$$
$$\nu=\delta\cdot \kappa(i_1)\cdots \kappa(i_t)\kappa(I)
\kappa(i_{t+1})\cdots \kappa(i_m),$$ we have
$$d(\mu,\pi)<d(\nu,\pi).$$
\end{prop}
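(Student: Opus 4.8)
The plan is to reduce to an explicit finite chain and then run a continuity argument. First I would reformulate: a proper $4$-colouring of the triangle is just an assignment of four distinct symbols to the three vertices and one ``spare'' slot, and $\kappa(v)$ is the operation that with probability $\tfrac12$ transposes the symbol at $v$ with the spare symbol. Thus $\delta\cdot\kappa(i_1)\cdots\kappa(i_m)$ is the law of a concrete lazy random walk on the $24$ colourings (equivalently on $S_4$ with the three ``star'' transpositions as steps). One immediate remark is useful: each $\kappa(v)$ sends a measure with dyadic-rational masses to another such measure, whereas $\pi$ has all masses $\tfrac1{24}$; hence $\mu$ can never equal $\pi$ after a finite schedule, which is precisely why the hypothesis that $d$ be continuous is needed, and it signals that the construction must be a limiting one.

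I would then build, for the given continuous $d$, a family of schedules $\sigma^{(n)}$ together with one fixed insertion position, such that the censored laws $\mu_n:=\delta\cdot\sigma^{(n)}$ tend to $\pi$ while the uncensored laws $\nu_n$, obtained by inserting a single extra update $\kappa(I)$, tend to some $\sigma\neq\pi$. The intended mechanism: the block of $\sigma^{(n)}$ before the insertion is short and \emph{fixed}, so it produces a fixed non-stationary measure $\rho$; the block after the insertion is long and drives $\rho$ towards $\pi$ along the censored run, but it is structured so as to equilibrate only certain degrees of freedom, relying on the remaining ones already being correct (for instance, a long tail that recolours only some of the vertices can only equilibrate the ``transverse'' part and needs the colour marginal at an untouched vertex to be right already). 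The site $I$ and the measure $\rho$ are then chosen so that inserting $\kappa(I)$ spoils exactly that residual correctness, and in a way the long tail cannot repair, so $\nu_n$ keeps a permanent bias and converges to a $\sigma\neq\pi$.

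Finally, because $d$ is a continuous metric on the compact space of probability measures it metrises the usual topology, so $d(\mu_n,\pi)\to d(\pi,\pi)=0$ while $d(\nu_n,\pi)\to d(\sigma,\pi)>0$; therefore $d(\mu_n,\pi)<d(\nu_n,\pi)$ for all large $n$, and any such $n$ supplies the desired $t<m$ and $i_1,\dots,i_m,I$. The hard part is the middle step: one must make fully explicit the fixed pre-block, the inserted site, and the equilibrating-but-incomplete tail, and then verify by a direct computation with the $24$-state chain both that the censored run really does converge to $\pi$ (which pins down exactly which marginal or invariant the tail leaves untouched) and that one extra update permanently perturbs that quantity. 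Identifying the responsible invariant and checking these two limits is the crux; everything else is elementary arithmetic.
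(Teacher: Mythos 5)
Your reduction is exactly the paper's: a $4$-coloring of the triangle is a permutation of four symbols over the three vertices plus a spare slot, and $\kappa(v)$ is the lazy star transposition $\tau(v,4)$, so the problem becomes one about the lazy transposition walk on $S_4$; your closing continuity argument (two limits, one equal to $\pi$ and one not, hence a strict inequality for large parameters) is also the paper's. But the step you yourself flag as ``the crux'' --- exhibiting the schedule, the inserted site, and verifying the two limits --- is precisely the content of the proof, and you have not supplied it. Without an explicit schedule there is no guarantee that your intended mechanism can be realized: the delicate point is that the censored run must converge \emph{exactly} to $\pi$ even though the tail equilibrates only some locations, which forces the marginal at the untouched location to be exactly uniform at the moment the tail starts. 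Nothing in your outline shows such a pre-block exists (indeed your proposed ``short and fixed'' pre-block yielding a fixed measure $\rho$ with that exact property is unverified, and the paper does not use one: it takes the pre-block itself to be a long limit).

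For comparison, the paper's example is $\delta\cdot[\kappa(2)\kappa(3)]^M\,\kappa(1)\kappa(2)\,[\kappa(1)\kappa(3)]^N$, with the extra update $\kappa(3)$ inserted between $\kappa(1)$ and $\kappa(2)$. The verification hinges on a small but non-obvious computation: starting from the limit of the first block (uniform on colorings with the original color fixed at vertex $1$, i.e.\ permutations fixing $1$), after $\kappa(1)\kappa(2)$ the color at vertex $2$ is \emph{exactly} uniform (the original color of vertex $1$ sits there with probability $1/4$, and conditionally the rest is uniform), so the tail $[\kappa(1)\kappa(3)]^N$, which never touches vertex $2$, drives the law to $\pi$; with the inserted $\kappa(3)$ that probability drops to $1/8$, a bias the tail can never repair, so the limit differs from $\pi$. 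Your proposal identifies the right invariant in spirit (the marginal at an untouched vertex) but does not produce the schedule or these two computations, so as it stands it is a strategy rather than a proof.
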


In other words, if, starting from a deterministic coloring, a sequence of
vertices is recolored, then inserting an extra vertex in the sequence can
move the resulting distribution further from $\pi$.  In particular, the
result applies when $d$ is total variation distance, in which case it answers
a question of Yuval Peres (personal communication).

Now we turn to permutations.  Let $\mu$ be a probability measure on the
symmetric group of permutations of $V:=\{1,\ldots,n\}$.  For a pair $i,j\in
V$, we define the lazy transposition operator $\tau(i,j)$ as follows. Let
$\rho=(\rho(1),\ldots,\rho(n))$ be a random permutation with law $\mu$, and
let $\mu \cdot \tau(i,j)$ be the law of the random permutation obtained from
$\rho$ by interchanging $\rho(i)$ and $\rho(j)$ with probability $1/2$
(conditional on $\rho$), and otherwise leaving $\rho$ unchanged. Let $\delta$
be the point measure on the identity permutation, and let $\pi$ be the
uniform probability measure on all $n!$ permutations.

\begin{prop}\label{perm}
Consider lazy transpositions on $V=\{1,\ldots,4\}$.  Let $d$ be any
continuous metric on the space of probability measures on permutations. There
exist $m,t,I,J,i_1,j_1,\ldots$ such that, writing
$$\mu:=\delta \cdot \tau(i_1,j_1) \cdots \tau(i_m,j_m),$$
$$\nu:=\delta \cdot \tau(i_1,j_1) \cdots \tau(i_t,j_t)
\tau(I,J) \tau(i_{t+1},j_{t+1}) \cdots \tau(i_m,j_m),$$
we have
$$d(\mu,\pi)<d(\nu,\pi).$$
\end{prop}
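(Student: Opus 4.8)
The plan is to obtain Proposition~\ref{perm} as a direct consequence of Proposition~\ref{col}, via a structure-preserving bijection between proper $4$-colorings of the triangle and permutations of $\{1,2,3,4\}$ under which the recoloring operators become lazy transpositions. Taking the triangle's vertex set to be $\{1,2,3\}$, I would associate to a proper $4$-coloring $c$ the permutation $\rho_c$ with $\rho_c(i)=c(i)$ for $i\in\{1,2,3\}$ and $\rho_c(4)$ equal to the unique color absent from $c$. This is a bijection from the $24$ colorings onto the $24$ permutations, and the first thing to record is that it carries the coloring given by $c(i)=i$ to the identity permutation and the uniform measure on colorings to the uniform measure $\pi$ on permutations.

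The key step is to verify that, for each vertex $i\in\{1,2,3\}$, the recoloring operator $\kappa(i)$ corresponds under this bijection to the lazy transposition operator $\tau(i,4)$. The reason is that the two colors available at $i$ --- those absent from $i$'s two neighbors --- are exactly $c(i)$ and the globally absent color $\rho_c(4)$; hence recoloring $i$ either leaves $c$ unchanged or exchanges the color at $i$ with the absent color, each with probability $1/2$, which is precisely the effect of interchanging $\rho(i)$ and $\rho(4)$ with probability $1/2$. Since the bijection on configurations extends to a homeomorphism between the two simplices of probability measures, any continuous metric $d$ on measures on permutations pulls back to a continuous metric $d'$ on measures on colorings. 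Applying Proposition~\ref{col} with the metric $d'$ then produces vertices $i_1,\dots,i_m,I\in\{1,2,3\}$ and integers $1\le t<m$ witnessing the strict inequality for colorings; setting $(i_k,j_k):=(i_k,4)$ and $(I,J):=(I,4)$ and transporting everything through the bijection converts this into the desired inequality $d(\mu,\pi)<d(\nu,\pi)$, the sequence involved consisting only of transpositions of the form $\tau(\cdot,4)$, which is allowed in the setting of Proposition~\ref{perm}.

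I do not anticipate a real obstacle: the only points needing care are the (routine) check that $\kappa(i)$ and $\tau(i,4)$ agree under the bijection, and the remark that ``any continuous metric'' is handled automatically, because the bijection of $24$-element sets induces an affine, hence bicontinuous, identification of the two measure spaces. It may be worth noting why one cannot hope to drop the metric, i.e.\ to find one sequence working for all continuous metrics at once: all transition probabilities are dyadic, so $\mu\ne\pi$ always (as $1/24$ is not dyadic), ruling out the case $\mu=\pi$ that would force $d(\mu,\pi)<d(\nu,\pi)$ for every metric. This is already built into Proposition~\ref{col}, whose proof presumably supplies sequences with $\mu$ arbitrarily close to --- but never equal to --- $\pi$ while the inserted version $\nu$ remains bounded away from $\pi$; the reduction above simply inherits this behavior.
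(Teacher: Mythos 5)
Your reduction between the two settings is correct as far as it goes: the map sending a proper $4$-coloring $c$ of the triangle to the permutation $\rho_c$ with $\rho_c(i)=c(i)$ for $i=1,2,3$ and $\rho_c(4)$ the absent color is a bijection carrying uniform to uniform, and under it $\kappa(i)$ is exactly $\tau(i,4)$, since the two colors available at $i$ are its current color and the globally absent one. This is precisely the correspondence the paper uses --- but in the opposite direction: the paper proves Proposition~\ref{perm} first, by an explicit construction, and then deduces Proposition~\ref{col} from it via this bijection, observing that the permutation example only involves transpositions of the form $\tau(\cdot,4)$.

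The genuine gap is that your argument is circular in context: you derive Proposition~\ref{perm} from Proposition~\ref{col}, but Proposition~\ref{col} is not an independently established result --- in this paper its only proof is the reverse of your reduction, resting on the example built in the proof of Proposition~\ref{perm}. So what you have shown is only that the two propositions are equivalent (for the identity starting configuration and transpositions involving location $4$); the actual mathematical content is never supplied by either half of that equivalence. What is missing is a concrete witness to the failure of censoring. The paper's is: $\delta\cdot\big[\tau(2,4)\tau(3,4)\big]^M\tau(1,4)\tau(2,4)\big[\tau(1,4)\tau(3,4)\big]^N\to\pi$ as $M,N\to\infty$ (after the block $[\tau(2,4)\tau(3,4)]^M$ the law is nearly uniform given $\rho(1)=1$; after $\tau(1,4)\tau(2,4)$ the particle in location $2$ is exactly uniform, and the remaining transpositions never touch location $2$ while uniformizing locations $1,3,4$), whereas inserting $\tau(3,4)$ after $\tau(1,4)$ makes particle $1$ occupy location $2$ with probability $1/8$ rather than $1/4$ in the limit, so the censored-in version converges to some $\alpha\neq\pi$; continuity of $d$ then gives the strict inequality for large $M,N$. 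Some such explicit construction (or an independent proof of Proposition~\ref{col}) is indispensable; without it your proposal does not constitute a proof.
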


\section{Proofs}

We will first prove Proposition \ref{perm}, and then deduce Proposition
\ref{col}.

\begin{proof}
We will show that, as $(M,N)\to\infty$,
\begin{equation}\label{pi}
\delta\cdot \big[\tau(2,4)\tau(3,4)\big]^M \tau(1,4)\tau(2,4)
\big[\tau(1,4)\tau(3,4)\big]^N \to \pi,
\end{equation}
while
\begin{equation}\label{not-pi}
\delta\cdot \big[\tau(2,4)\tau(3,4)\big]^M
\tau(1,4)\tau(3,4)\tau(2,4) \big[\tau(1,4)\tau(3,4)\big]^N
\to \alpha,
\end{equation}
for some $\alpha\neq\pi$.  By the continuity of $d$, the
required inequality then follows by taking $M$ and $N$
sufficiently large.

We interpret permutations as arrangements of particles, so that in
permutation $\mu$, particle $\mu(i)$ is in location $i$, and $\tau(i,j)$
swaps the particles in locations $i,j$ with probability $1/2$. Let $\pi_1$ be
the uniform measure on permutations $\rho$ such that $\rho(1)=1$, and note
that
$$
\delta\cdot \big[\tau(2,4)\tau(3,4)\big]^M\to \pi_1\quad\text{as}\quad M\to\infty
$$
(by the convergence theorem for irreducible aperiodic Markov
chains). Now consider a random permutation $\sigma$ with law
$$\beta:=\pi_1 \cdot \tau(1,4)\tau(2,4).$$
The location $\sigma^{-1}(1)$ of particle $1$ is equal to $2$ with
probability $1/4$, since after the first transposition $\tau(1,4)$ it was $1$
or $4$ each with probability $1/2$. Conditional on the location of particle
$1$, the arrangement of particles $2,3,4$ is still uniform, so $\sigma(2)$
(the particle in location $2$) is exactly uniform among $1,\ldots,4$.
Therefore,
$$
\beta \cdot \big[\tau(1,4)\tau(3,4)\big]^N\to \pi\quad\text{as}\quad N\to\infty,
$$
since conditional on $\sigma(2)$, the effect of the additional transpositions
is to uniformize the particles in locations $1,3,4$ in the limit.  The
convergence \eqref{pi} now follows by the continuity of the transposition
operator $\tau(i,j)$.

A similar argument gives \eqref{not-pi}: after applying the extra
transposition $\tau(3,4)$, particle $1$ is at location $4$ with probability
$1/4$ (and cannot be at $2$), therefore after $\tau(2,4)$ it is at $2$ with
probability $1/8$. Thus \eqref{not-pi} holds with $\alpha$ the law of some
random permutation that has $1$ in location $2$ with probability $1/8$.
\end{proof}

\begin{proof}[Proof of Proposition \ref{col}]
Let the triangle $G$ have vertices $1,2,3$, and assume without
loss of generality that $\delta$ is the point measure on the
identity map.  We may identify a $4$-coloring of $G$ with a
permutation assigning colors $1,\ldots, 4$ to {\em four}
vertices $1,\dots,4$, with the color at vertex $4$ being the
one absent from the coloring of $G$.  For $i=1,2,3$, the
operator $\kappa(i)$ corresponds to the lazy transposition
operator $\tau(i,4)$.  Since the example constructed in the
proof of Proposition \ref{perm} uses only transpositions
involving $4$, the same example applies here.
\end{proof}

\section{Further remarks}

The example in the proof of Proposition \ref{perm} was chosen to minimize
computations and facilitate the proof of Proposition \ref{col}. Naturally,
many variations are possible. If $d$ is total variation distance, an explicit
computation shows that the required inequality in fact holds with $M=N=1$. As
a simpler alternative which does not adapt so readily to coloring, we have
$$
\delta\cdot \big[\tau(2,4)\tau(3,4)\big]^M \tau(1,4)\tau(2,4)\tau(1,3)
\to \pi, \quad\text{as}\quad M\to\infty,
$$
while the insertion of $\tau(3,4)$ before the last $\tau(1,3)$ again gives a
different limit.  Finally, if we relax the problem by allowing a ``block
update" $\tau(S)$ (defined so as to uniformly permute the elements of a set
$S\subset V$), then we may of course do away with limits, replacing the
expressions $[\;\;]^M$ and $[\;\;]^N$ with $\tau(\{2,3,4)\}$ and
$\tau(\{1,3,4\})$.

We note that our example adapts to the {\em anti-ferromagnetic} Potts model.
Consider the $4$-state Potts model on a triangle, with anti-ferromagnetic
interactions (i.e.\ favoring distinct spins) of equal strength $J$ along each
edge.  As $J\to \infty$, the transition probabilities for site updates
approach those of the $4$-coloring model.  Hence, starting from a
configuration where all $3$ vertices have different spins, the example in the
proof of Proposition \ref{col} applies here if $J$ is large enough.
Moreover, starting from a constant all-$1$ configuration and updating
vertices $2$ and $3$ results in a configuration that is asymptotically (as
$J\to\infty$) uniform on those where vertex $1$ has spin $1$.  Hence the same
example applies with $M=1$.

It is an open question whether extra updates can delay mixing for the
ferromagnetic Potts model starting from a constant spin configuration.

\bibliographystyle{habbrv}
\bibliography{bib}

\end{document}